\newtheorem{guia}{}
\newtheorem{rem}{}
\newtheorem{teorema}[guia]{Theorem}
\newtheorem{coro}[guia]{Corollary}
\newtheorem{lema}[guia]{Lemma}
\newtheorem{obs}[rem]{\it Remark}
\newcommand{\al}{\alpha}
\newcommand{\De}{\Delta}
\newcommand{\g}{\gamma}
\newcommand{\la}{\lambda}
\newcommand{\Om}{\Omega}
\newcommand{\p}{\partial}
\newcommand{\R}{\mathbb R}
\newcommand{\ds}{\displaystyle}
\begin{document}

\title[Symmetry of large solutions]
{\bf Symmetry of large solutions for semilinear elliptic equations in a ball}

\author[C. Cort\'azar, M. Elgueta and J. Garc\'{\i}a-Meli\'{a}n]
{Carmen Cort\'azar, Manuel Elgueta\\ and Jorge
Garc\'{\i}a-Meli\'{a}n}

\date{}

\address{C. Cort\'azar and M. Elgueta \hfill\break\indent
Departamento de Matem\'aticas \hfill\break\indent Facultad de
Matem\'aticas, Pontificia Universidad Cat\'olica de Chile
\hfill\break\indent Casilla 306, correo 22 -- Santiago, CHILE}
\email{{\tt ccortaza@mat.puc.cl, melgueta@mat.puc.cl}}

\address{J. Garc\'{\i}a-Meli\'{a}n \hfill\break\indent
Departamento de An\'{a}lisis Matem\'{a}tico, Universidad de La
Laguna \hfill \break \indent C/. Astrof\'{\i}sico Francisco
S\'{a}nchez s/n, 38200 -- La Laguna, SPAIN\hfill\break\indent
{\rm and} \hfill\break
\indent Instituto Universitario de Estudios Avanzados (IUdEA) en F\'{\i}sica
At\'omica,\hfill\break\indent Molecular y Fot\'onica,
Universidad de La Laguna\hfill\break\indent C/. Astrof\'{\i}sico Francisco
S\'{a}nchez s/n, 38200 -- La Laguna , SPAIN} \email{{\tt
jjgarmel@ull.es}}


\begin{abstract}
In this work we consider the boundary blow-up problem 
$$
\left\{
\begin{array}{ll}
\De u = f(u) & \hbox{in } B\\
\ \ u=+\infty & \hbox{on }\p B
\end{array}
\right.
$$
where $B$ stands for the unit ball of $\R^N$ and $f$ is a locally 
Lipschitz function which is positive for large values and 
verifies the Keller-Osserman condition. Under an additional 
hypothesis on the asymptotic behavior of $f$ we show that all 
solutions of the above problem are radially symmetric and radially 
increasing. Our condition is sharp enough to generalize several  
results in previous literature.
\end{abstract}

\maketitle


\section{Introduction}
\setcounter{section}{1}
\setcounter{equation}{0}

In this paper we are addressing the question of radial symmetry of solutions of 
the boundary blow-up problem 
\begin{equation}\label{problema}
\left\{
\begin{array}{ll}
\De u = f(u) & \hbox{in } B\\
\ \ u=+\infty & \hbox{on }\p B.
\end{array}
\right.
\end{equation}
Here $B$ stands for the unit ball of $\R^N$, $B:=\{x\in \R^N:\ |x|<1\}$, and 
$f$ is a locally Lipschitz nonlinearity. The boundary condition is understood in 
the sense $u(x)\to +\infty$ whenever $x\to \p B$. 

Our interest in this problem comes from the symmetry results 
obtained for the semilinear Dirichlet problem
\begin{equation}\label{eq-Dirichlet}
\left\{
\begin{array}{ll}
-\De u = f(u) & \hbox{in } B\\
\ \ u=0 & \hbox{on }\p B.
\end{array}
\right.
\end{equation}
It is very well-known since the classical reference \cite{GNN} that positive solutions of this problem 
are radially symmetric and radially decreasing. Indeed, this property also holds for some more 
general elliptic problems related to \eqref{eq-Dirichlet}.

\medskip

In light of this achievement, it is to be expected that \eqref{problema} enjoys a similar 
property. As a matter of fact, it has been conjectured by H. Brezis that all solutions of \eqref{problema} are 
radially symmetric and radially increasing (see \cite{PV}). However, at the best of our knowledge 
only partial results in this direction have been obtained. Let us mention \cite{MRW}, \cite{PV} 
and \cite{CD}, where solutions have been shown to be radially symmetric and radially increasing 
under some restrictions on the behavior of $f$ at infinity. 

It is to be noted that the singularity of the solutions of \eqref{problema} near $\p B$ introduces  
additional difficulties when trying to implement the standard techniques that do work for 
\eqref{eq-Dirichlet}. For instance, the method of moving planes, which is the main tool used 
to achieve most of the symmetry results (cf. \cite{A}, \cite{S}, \cite{GNN}, \cite{BN}) 
is not directly applicable unless some information on the behavior of solutions near the boundary 
is available. Unfortunately, the only known way for the moment to obtain this information 
is at the expense of requiring further hypotheses on $f$ for large values.

\medskip

Let us next give a precise statement of our findings. We will assume that the function $f$ 
is positive for large values, in the sense that 
$$
\hbox{there exists } a>0 \hbox{ such that } f(a)>0 \hbox{ and } f(t)\ge 0 \hbox{ for }
t > a.
$$
\noindent It is then well-known that problem \eqref{problema} admits solutions if and only if 
the Keller-Osserman condition 
\begin{equation}\label{KO}
\int_{t_0}^\infty \frac{ds}{\sqrt{F(s)}} <+\infty
\end{equation}
holds, where $F(t)=\int_0^t f(s) ds$. See the classical references \cite{K} and \cite{O} when 
$f$ is monotone, and the more recent paper \cite{DDGR}, where the monotonicity condition is dispensed with.
Thus we will also assume that \eqref{KO} holds.

Our main restriction in the study of problem \eqref{problema} is the following condition: there exist 
$p>1$ and $K>0$ such that 
\begin{equation}\label{hipo-f}
f(t)+K t^p \hbox{ is nondecreasing for large } t.
\end{equation}
This hypothesis is to be complemented with the following one:
\begin{equation}\label{hipo-segunda}
\lim_{t\to +\infty} \frac{t^\frac{p-1}{2} \phi(t)}{\sqrt{F(t)}}=0,
\end{equation}
where
\begin{equation}\label{def-phi}
\phi(t)=\int_t^\infty \frac{ds}{F(s)}.
\end{equation}
It is interesting to remark that the case $p=1$ in \eqref{hipo-f} 
was already considered in \cite{CD}, where the symmetry of solutions of \eqref{problema} 
was obtained with no further restrictions on $f$. Observe that in this case 
\eqref{hipo-segunda} automatically holds, since $\lim_{t\to +\infty} \phi(t)=0$ and 
$\lim_{t\to +\infty} F(t)=+\infty$.

We can show the following:

\begin{teorema}\label{th-simetria}
Assume $f$ is locally Lipschitz, positive for large values and verifies the Keller-Osserman condition \eqref{KO}. 
If moreover $f$ verifies \eqref{hipo-f} for some $K>0$ and $p>1$ and \eqref{hipo-segunda}, then 
every solution $u\in C^2(B)$ of \eqref{problema} is radially symmetric and radially increasing.
\end{teorema}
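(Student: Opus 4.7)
The plan is to use the method of moving planes, adapting the technique of \cite{GNN,BN} as implemented for the special case $p=1$ in \cite{CD}. For a fixed direction, say $x_1$, set $\Sigma_\lambda=\{x\in B:x_1<\lambda\}$, let $x^\lambda$ denote the reflection of $x$ across $T_\lambda=\{x_1=\lambda\}$, and write $u_\lambda(x)=u(x^\lambda)$. The aim is to prove $w:=u-u_\lambda\ge 0$ on $\Sigma_\lambda$ for every $\lambda\in(-1,0]$; once this is done in every direction, the limit case $\lambda=0$ yields symmetry with respect to every hyperplane through the origin (hence radiality), and the inequalities for $\lambda<0$ themselves encode that $u$ is nondecreasing along each ray from the origin. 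The central obstruction, characteristic of blow-up problems, is that the values of $u$ on $\partial B$ are uninformative; the comparison cannot be read off from $\partial B$, so a quantitative asymptotic description of $u$ near the boundary is indispensable.

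The key preparatory step is therefore a sharp boundary expansion. Let $g$ denote the 1D boundary layer profile, solving $g''=f(g)$ with $g(0^+)=+\infty$; the first integral $(g')^2=2F(g)$ shows that $\sqrt{F(u)}$ is the natural scale of $|\nabla u|$. I expect to prove that $u(x)=g(d(x))+R(x)$ uniformly in the tangential direction as $d(x)\to 0$, with a quantitative bound on the remainder $R$. The hypothesis \eqref{hipo-segunda} is precisely tailored to this expansion: $\phi(t)$ is the natural measure of the next-order correction past the leading 1D profile, while the weight $t^{(p-1)/2}$ is exactly what appears when linearising the perturbation $f(t)+Kt^p$ of \eqref{hipo-f} in the maximum principle step below. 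Since for $x\in\Sigma_\lambda$ with $\lambda<0$ one has $d(x)<d(x^\lambda)$, and $g$ is decreasing, the expansion immediately yields $w>0$ in a boundary strip $\{d(x)<\delta\}\cap\Sigma_\lambda$, with $\delta>0$ uniform as long as $\lambda$ stays bounded away from $0$.

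With the boundary strip handled, the remainder is classical. On any subset of $\Sigma_\lambda$ where $w<0$, the monotonicity in \eqref{hipo-f} gives $\Delta w+c(x)w\le 0$ with $c(x)=Kp\,\xi(x)^{p-1}$ for some $\xi$ between $u_\lambda$ and $u$; on the interior region $\{d(x)\ge\delta\}$ both $u$ and $u_\lambda$ are uniformly bounded, so $c$ is bounded, and for $\lambda$ close to $-1$ the set $\Sigma_\lambda$ has small measure, whence the small-domain weak maximum principle from \cite{BN} forces $w\ge 0$ in the interior as well. This provides the starting configuration. The standard continuation, setting $\lambda_0:=\sup\{\lambda\le 0:\,u\ge u_\mu\text{ in }\Sigma_\mu\text{ for all }\mu\le\lambda\}$, is then shown to satisfy $\lambda_0=0$ by combining the strong maximum principle (which upgrades $w\ge 0$ to $w>0$ strictly at $\lambda_0$), the boundary-strip inequality at $\lambda_0+\varepsilon$, and the small-domain principle outside the strip, everything uniform in $\lambda$ thanks to the expansion of the previous step. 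I expect the main technical obstacle to be precisely that expansion: the remainder must be sharp enough, uniformly up to $\partial B$, that the possibly unbounded linearisation coefficient $c(x)$ does not overwhelm the small-domain maximum principle near $\partial B$, and this is the role played by the delicate hypothesis \eqref{hipo-segunda}.
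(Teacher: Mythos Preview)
Your overall plan (moving planes, linearise using \eqref{hipo-f}, and exploit a sharp boundary description to start and to continue the process) is correct in outline and matches the paper's strategy. But the way you propose to handle the boundary is not what the paper does, and as stated it has a gap.

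You claim that the expansion $u=g(d)+R$ will give $w=u-u_\lambda>0$ on a full boundary collar $\{d<\delta\}\cap\Sigma_\lambda$, after which the coefficient $c(x)=Kp\,\xi^{p-1}$ is bounded on the complementary interior set and the usual Berestycki--Nirenberg small-measure maximum principle applies. The trouble is the part of the collar that abuts $T_\lambda\cap\partial B$. There both $d(x)$ and $d(x^\lambda)$ tend to zero, and their difference is only of order $|x_1-\lambda|$; so the leading gain $g(d(x))-g(d(x^\lambda))\sim |x_1-\lambda|\sqrt{F(u)}$ can be arbitrarily small and need not dominate the remainder $R$, which is of order $\phi(u)$. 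Thus the expansion alone does not force $w>0$ throughout $\{d<\delta\}\cap\Sigma_\lambda$; the best it gives is that the negativity set $D_\lambda:=\{w<0\}$ is trapped in a thin ``lentil'' $\{\,\lambda<x_1<\lambda+H(x)\,\}$ with $H(x)\sim \phi(U_\lambda)/\sqrt{F(U_\lambda)}$. On that lentil the linearised coefficient is genuinely singular ($\sim U_\lambda^{p-1}$), so the standard bounded-coefficient narrow-domain principle you invoke does not apply.

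This is exactly where the paper's new ingredient enters. Rather than trying to prove $w>0$ directly near $T_\lambda\cap\partial B$, the paper (i) uses the comparison of $u$ with a radial solution $U$ from \cite{CDG} (this replaces your one-dimensional profile $g$ and yields the bound $U-u\le C\phi(U)$), (ii) deduces the lentil localisation of $D_\lambda$ as above, and (iii) proves a \emph{singular} maximum principle (Lemma~\ref{lema-MP}) for $\Delta v+C_0U_\lambda^{p-1}v\le0$ valid precisely on domains contained in that lentil, by constructing a supersolution $\omega(x)=\cos\big(\mu\,U_\lambda(x)^{(p-1)/2}(x_1-\lambda)\big)$. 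Hypothesis \eqref{hipo-segunda} is used not to sharpen the expansion, as you suggest, but to guarantee that the lentil width $H(x)$ is small enough relative to the blow-up of $U_\lambda^{(p-1)/2}$ for this barrier to work; concretely, it forces $\mu U_\lambda^{(p-1)/2}(x_1-\lambda)\to0$. Once Lemma~\ref{lema-MP} gives $w\ge0$ on $D_\lambda$ for $\lambda$ near $1$ (and, in Step~2, near $T_{\lambda^*}\cap\partial B$), the continuation argument proceeds as you describe. So the missing idea in your proposal is this singular narrow-domain maximum principle, and the correct role of \eqref{hipo-segunda} is to make it hold.
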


\medskip

It is important to note that condition \eqref{hipo-segunda} is automatically satisfied whenever 
\eqref{hipo-f} holds with $p=5$ and $f$ verifies the Keller-Osserman condition (cf. the proof of 
Corollary \ref{th-simetria-2} below). Therefore, we obtain a 
symmetry result which substantially generalizes those in \cite{MRW}, \cite{PV} and \cite{CD}. 
In these works, the hypotheses imposed on $f$ always imply \eqref{hipo-f} with $p=1$.

\begin{coro}\label{th-simetria-2}
Assume $f$ is locally Lipschitz, positive for large values and verifies the Keller-Osserman condition 
\eqref{KO}. If moreover there exists $K>0$ such that 
$$
f(t)+K t^5 \hbox{ is nondecreasing for large } t,
$$
\noindent then every solution $u\in C^2(B)$ of \eqref{problema} is radially symmetric and radially increasing.
\end{coro}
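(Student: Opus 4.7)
The plan is to deduce Corollary \ref{th-simetria-2} from Theorem \ref{th-simetria} applied with $p=5$. All hypotheses of that theorem are directly assumed in the corollary except \eqref{hipo-segunda}, so it suffices to verify
$$
\lim_{t\to +\infty}\frac{t^{2}\phi(t)}{\sqrt{F(t)}}=0,
$$
using only the Keller--Osserman condition and the fact that $f\ge 0$ for large $t$ (which makes $F$ eventually nondecreasing); the monotonicity of $f(t)+Kt^{5}$ itself enters only through the choice $p=5$ in Theorem \ref{th-simetria}.

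My first step is to bound $\phi$ by the Keller--Osserman tail $\psi(t):=\int_t^\infty \frac{ds}{\sqrt{F(s)}}$. For $s\ge t$ sufficiently large one has $F(s)\ge F(t)$, and writing
$$
\frac{1}{F(s)}\le \frac{1}{\sqrt{F(t)}\sqrt{F(s)}}
$$
and integrating yields $\phi(t)\le \psi(t)/\sqrt{F(t)}$, so
$$
\frac{t^{2}\phi(t)}{\sqrt{F(t)}}\le \psi(t)\cdot\frac{t^{2}}{F(t)}.
$$
Since $\psi(t)\to 0$ by \eqref{KO}, the problem reduces to controlling $t^{2}/F(t)$.

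This is accomplished by exploiting the monotonicity of $F$ on $[t,2t]$: for $s$ in that interval $F(s)\le F(2t)$, hence
$$
\psi(t)\ge \int_t^{2t}\frac{ds}{\sqrt{F(s)}}\ge \frac{t}{\sqrt{F(2t)}}.
$$
Because $\psi(t)\to 0$, this forces $F(2t)/t^{2}\to +\infty$ and therefore $F(t)/t^{2}\to +\infty$. Plugging back into the previous estimate gives $\frac{t^{2}\phi(t)}{\sqrt{F(t)}}\to 0$, which is \eqref{hipo-segunda} with $p=5$, and Theorem \ref{th-simetria} yields the conclusion.

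No real obstacle arises; the argument is essentially the observation that Keller--Osserman, together with eventual monotonicity of $F$, quantitatively forces $F(t)/t^{2}\to +\infty$. The exponent $5$ in the corollary is calibrated so that $(p-1)/2=2$ matches exactly this borderline growth rate. A larger exponent would render the crude bound $\phi(t)\le \psi(t)/\sqrt{F(t)}$ insufficient and would demand finer asymptotic information on $F$ at infinity.
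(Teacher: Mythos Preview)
Your proof is correct and follows essentially the same route as the paper: bound $\phi(t)\le C\psi(t)/\sqrt{F(t)}$ using the eventual monotonicity of $F$, reduce to $t^{2}/F(t)\to 0$, and invoke Theorem \ref{th-simetria} with $p=5$. The only difference is that you supply an explicit argument (via the integral over $[t,2t]$) for the fact that Keller--Osserman forces $F(t)/t^{2}\to+\infty$, which the paper simply asserts.
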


\bigskip

One of the main advantages of our approach is that it enables us to deal with nonlinearities 
whose derivative highly oscillates at infinity. As an example, we consider the class of problems
\begin{equation}\label{problema-ejemplo}
\left\{
\begin{array}{ll}
\De u = u^q (1+\sin u) & \hbox{in } B\\
\ \ u=+\infty & \hbox{on }\p B,
\end{array}
\right.
\end{equation}
where $q>1$, which have been introduced in \cite{DDGR}. As far as we know, the symmetry 
results available up to now do not apply to problem \eqref{problema-ejemplo}. 
But, as a consequence of Theorem \ref{th-simetria}, we have

\begin{coro}\label{coro-ejemplo}
Assume $q>1$. Then every solution of problem \eqref{problema-ejemplo} is radially symmetric and 
radially increasing.
\end{coro}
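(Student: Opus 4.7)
The plan is to apply Theorem~\ref{th-simetria} directly to $f(t)=t^q(1+\sin t)$, by checking in turn each of its four hypotheses. The function $f$ is $C^\infty$ and nonnegative on $[0,\infty)$, with $f(\pi/2)=2(\pi/2)^q>0$, so it is locally Lipschitz and positive for large values. It remains to verify the Keller--Osserman condition \eqref{KO}, the monotonicity condition \eqref{hipo-f}, and the asymptotic condition \eqref{hipo-segunda}.

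For \eqref{hipo-f}, I would differentiate: $f'(t)=qt^{q-1}(1+\sin t)+t^q\cos t$, which yields the one-sided lower bound $f'(t)\ge -t^q-2qt^{q-1}$. Choosing $p:=q+1>1$ and any $K>1/(q+1)$, we get $f'(t)+pKt^{p-1}=\bigl((q+1)K-1\bigr)t^q-2qt^{q-1}\ge 0$ for all sufficiently large $t$, so $f(t)+Kt^p$ is nondecreasing on a neighborhood of infinity.

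The remaining two conditions both rest on controlling the asymptotic behavior of $F(t)=\int_0^t s^q(1+\sin s)\,ds$. I would integrate by parts in $\int_0^t s^q\sin s\,ds=-t^q\cos t+q\int_0^t s^{q-1}\cos s\,ds$ to see that this integral is $O(t^q)$; consequently
\[
F(t)=\frac{t^{q+1}}{q+1}+O(t^q), \qquad t\to+\infty,
\]
so $F(t)\sim t^{q+1}/(q+1)$ and $\sqrt{F(t)}\sim t^{(q+1)/2}/\sqrt{q+1}$. Since $q+1>2$, this gives $\int^\infty ds/\sqrt{F(s)}<\infty$, which is \eqref{KO}. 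Moreover, from $F(s)\ge cs^{q+1}$ for large $s$ it follows that $\phi(t)=\int_t^\infty ds/F(s)\le C\,t^{-q}$.

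Combining these estimates with $p=q+1$ yields
\[
\frac{t^{(p-1)/2}\phi(t)}{\sqrt{F(t)}}
\;\le\; C'\cdot t^{q/2}\cdot t^{-q}\cdot t^{-(q+1)/2}
\;=\; C'\, t^{-q-\frac12}\ \longrightarrow\ 0,
\]
so \eqref{hipo-segunda} holds, and Theorem~\ref{th-simetria} applies. The only slightly delicate step is isolating the leading term of $F$ despite the oscillation of $\sin s$, but this is handled cleanly by the single integration by parts above; none of the verifications presents a real obstacle.
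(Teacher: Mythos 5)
Your proposal is correct and follows essentially the same route as the paper's first proof of this corollary: verify \eqref{hipo-f} with $p=q+1$ (the paper takes $K=\tfrac{1}{q+1}$ from $f'(t)\ge -t^q$), establish $F(t)\sim t^{q+1}/(q+1)$ and $\phi(t)\le Ct^{-q}$, and conclude \eqref{hipo-segunda} and hence symmetry via Theorem \ref{th-simetria}. Your explicit verification of the Keller--Osserman condition and the integration by parts for the oscillatory term are harmless additions that the paper leaves implicit.
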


As a matter of fact, we will give also an alternative proof of this corollary, whose idea 
can be applied to give symmetry for some more general problems. See Remark \ref{rem-general} 
in Section 4.

\bigskip

To conclude the Introduction, let us briefly describe our technique of proof. We employ 
the method of moving planes. But instead of comparing the normal derivative of a solution 
of \eqref{problema} with the tangential ones, as in \cite{PV} or \cite{CD}, we use a maximum 
principle in narrow domains in the spirit of \cite{BN}. However, it is worthy of mention that 
the problems we obtain with this approach are very singular near $\p B$, so that the 
maximum principle in narrow domains is not expected to be true in general. Thus it is important 
to apply it only for domains which become \emph{really narrow} when approaching the boundary. 
It is here where condition \eqref{hipo-segunda} comes into play. Its role is to make 
compatible the singularity in the equation with the width of the domain. See precise details 
in Section 3. 

Finally, we would also like to say that the power function in condition \eqref{hipo-f} is only 
a convenience, and it can be replaced by more general functions. For instance, if we assume 
the existence of $\al>0$ and $K>0$ such that $f(t) + K e^{\al t}$ is nondecreasing for large 
$t$ and 
$$
\lim_{t\to +\infty} \frac{e^\frac{\al t}{2}\phi(t)}{\sqrt{F(t)}}=0,
$$
then every positive solution of \eqref{problema} is radially symmetric and radially increasing.
This makes it possible to obtain symmetry of solutions for some very oscillating problems like 
$$
\left\{
\begin{array}{ll}
\De u = e^{\al u} (1+\sin u) & \hbox{in } B\\
\ \ u=+\infty & \hbox{on }\p B,
\end{array}
\right.
$$
where $\al>0$. The interested reader can work out the details, which we omit for brevity.

\medskip

The rest of the paper is organized as follows: in Section 2 we will give some preliminaries 
dealing with estimates of solutions of \eqref{problema} near the boundary of $B$. Section 3 is 
dedicated to the proof of a maximum principle for singular problems in narrow domains, while in 
Section 4 we perform the proof of the symmetry results, Theorem \ref{th-simetria} and 
Corollaries \ref{th-simetria-2} and \ref{coro-ejemplo}.

\bigskip

\section{Preliminaries}
\setcounter{section}{2}
\setcounter{equation}{0}

In this section we collect some preliminary results related to problem \eqref{problema}. 
We are mainly interested in estimates for all possible solutions of \eqref{problema} 
near $\p B$. In this regard, the function
\begin{equation}\label{def-psi}
\psi(t)= \frac{1}{\sqrt{2}} \int_t^\infty \frac{ds}{\sqrt{F(s)}}
\end{equation}
which is well-defined for large $t$ because of the positivity of $f$ for large values and 
the Keller-Osserman condition \eqref{KO}, is known to play a prominent role. Also the function 
$\phi$ defined by \eqref{def-phi} in the introduction will turn out to be relevant. In 
order that our estimates can be posed globally, we first define the function for all 
values of $t$ by choosing a large enough $t_0$ and setting
\begin{equation}\label{def-phi-2}
\phi(t)=\left\{
\begin{array}{ll}
\ds \int_t^\infty \frac{ds}{F(s)} & \hbox{if } t>t_0,\\[1pc]
\ds \int_{t_0}^\infty \frac{ds}{F(s)} & \hbox{if } t \le t_0
\end{array}
\right.
\end{equation}
The first result related to boundary behavior of solutions is Theorem 
1.5 in \cite{DDGR}. We are setting as usual in this context $d(x)=\hbox{dist}(x,\p B)=1-|x|$. 

\begin{lema}\label{lema-comp-asint}
Let $u\in C^2(B)$ be a solution of \eqref{problema}. Then 
$$
\lim_{x\to \p B} \frac{\psi(u(x))}{d(x)}=1.
$$
\end{lema}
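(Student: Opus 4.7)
This is a two-sided asymptotic, and I would establish $\liminf_{x\to\p B} \psi(u(x))/d(x) \ge 1$ and $\limsup \le 1$ separately, both through comparison with radial functions whose profile is $\psi^{-1}\circ d$.

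For the inequality $\liminf \ge 1$, I use the interior tangent ball. Given $x_0\in B$ with $d_0:=d(x_0)$ small, the ball $B(x_0,d_0)\subset B$ carries a radial blow-up solution $V$, whose existence is guaranteed by the Keller-Osserman hypothesis. Since $u$ is finite on $\p B(x_0,d_0)$ while $V=+\infty$ there, the standard comparison principle (valid because $f$ is locally Lipschitz and positive for large values) gives $u\le V$ inside, hence $u(x_0)\le V_{\min}$. Because $\psi$ is decreasing this yields $\psi(u(x_0))\ge\psi(V_{\min})$, so the problem reduces to showing that $\psi(V_{\min})/d_0\to 1$ as $d_0\to 0$.

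For $\limsup\le 1$, I construct an explicit subsolution. Set $w_\e(x):=\psi^{-1}((1+\e)d(x))$ for $\e>0$. Using $\psi'(t)=-1/\sqrt{2F(t)}$, a direct chain-rule computation yields
\begin{equation*}
\Delta w_\e = (1+\e)^2 f(w_\e) + \frac{N-1}{|x|}(1+\e)\sqrt{2F(w_\e)} \ge f(w_\e)
\end{equation*}
in the annulus $\{1-\delta<|x|<1\}$ for $\delta$ sufficiently small, so $w_\e$ is a local subsolution blowing up on $\p B$. A comparison argument on the nested family of annuli $\{1-\delta<|x|<1-\eta\}$, passing to the limit $\eta\to 0$ and using that $u\to+\infty$ uniformly on $\p B$ (while $w_\e$ remains bounded on each sphere $|x|=1-\eta$), gives $u\ge w_\e$ in a neighborhood of $\p B$. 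Hence $\psi(u(x))\le(1+\e)d(x)$ there, and letting $\e\to 0$ yields the desired upper bound.

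The main obstacle is the refined radial asymptotic $\psi(V_{\min})/d_0\to 1$. Multiplying the radial ODE $V''+(N-1)V'/r=f(V)$ by $V'$ and integrating produces the energy identity $(V'(r))^2/2 = F(V(r))-F(V(0))-(N-1)\int_0^r (V'(s))^2/s\,ds$, from which $(V')^2\le 2F(V)$ and $\psi(V_{\min})\le d_0$ follow at once. The matching lower bound requires showing that the correction $(N-1)\int_0^r(V')^2/s\,ds$ is negligible compared with $F(V)$ in a thin layer near $r=d_0$ (once $d_0$ is small so that $V_{\min}$ is large), so that $V'/\sqrt{2F(V)}\to 1$ on almost all of $(0,d_0)$; this is a delicate but standard analysis in the Keller-Osserman/Bandle-Marcus tradition. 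A secondary difficulty is the comparison step itself, which is handled by the usual sub/super-solution machinery for locally Lipschitz $f$.
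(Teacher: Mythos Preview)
The paper does not prove this lemma; it is simply quoted as Theorem~1.5 of \cite{DDGR}. So there is no ``paper's proof'' to compare against, and your sketch is the classical Bandle--Marcus strategy: interior tangent ball plus the radial profile $\psi^{-1}$ for the two inequalities. The computation of $\Delta w_\e$ is correct, and you correctly isolate the one-dimensional asymptotic $\psi(V_{\min})/d_0\to 1$ as the core of the $\liminf$ bound.

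The genuine gap is the comparison step, which you invoke twice and dismiss as standard. In the setting of the paper $f$ is only locally Lipschitz and positive for large values; it is \emph{not} assumed monotone (the model nonlinearity $f(t)=t^q(1+\sin t)$ is not). Neither ``$u\le V$ in $B(x_0,d_0)$'' nor ``$u\ge w_\e$ near $\p B$'' follows from a maximum principle here: at an interior extremum of $u-V$ one only obtains $f(u)\le f(V)$ with $u>V$, which is no contradiction. Local Lipschitz continuity gives uniqueness for the Dirichlet problem, not comparison of sub- and supersolutions, so ``the usual sub/super-solution machinery for locally Lipschitz $f$'' does not cover this. Handling precisely this non-monotone situation is the point of \cite{DDGR}, and their proof needs additional ingredients (for instance, replacing $f$ by its monotone envelope and showing that the associated $\psi$-functions are asymptotically equal under the Keller--Osserman condition).

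There is also a smaller circularity in your $\limsup$ argument. On the outer sphere $|x|=1-\eta$ the subsolution takes the value $w_\e=\psi^{-1}((1+\e)\eta)$, which tends to $+\infty$ as $\eta\to 0$ just as $u$ does; saying ``$w_\e$ remains bounded while $u\to+\infty$'' does not by itself yield $u\ge w_\e$ there without already knowing the asymptotic you are proving. The standard repair is to shift: use $w_{\e,c}(x)=\psi^{-1}\bigl((1+\e)d(x)+c\bigr)$ with $c>0$, which is bounded up to $\p B$, obtain $u\ge w_{\e,c}$, and let $c\downarrow 0$ afterwards.
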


\bigskip

As for the behavior of the derivatives of solutions of \eqref{problema}, there is no general 
information available except for nonlinearifies $f$ subject to further restrictions. However, 
when it comes to radially symmetric solutions $U$, it can be easily proved that 
\begin{equation}\label{comp-derivada}
U'(r) \sim 2 \sqrt{F(U(r))} \quad \hbox{as } r\to 1
\end{equation}
(cf. for instance the proof of Proposition 3.1 in \cite{DDGR}).

Finally, let us include a result which somehow refines the boundary behavior given in 
Lemma \ref{lema-comp-asint}. It essentially gives information on the growth of an 
arbitrary solution $u$ of \eqref{problema}, by comparing it with a radially symmetric 
solution, and it will be an important ingredient in our proofs of symmetry. See
Lemma 2.4 and Theorem 1.1 in \cite{CDG}.

\begin{lema}\label{lema-exist-radial}
Assume $u\in C^2(B)$ is a solution of \eqref{problema}. Then there exists a radially 
symmetric solution $U\in C^2(B)$ of \eqref{problema} such that $u\le U$ in $B$.
Moreover, there exists $C>0$ such that 
$$
U(x)-u(x) \le C \phi(U(x)), \quad x\in B,
$$
where $\phi$ is given in \eqref{def-phi-2}.
\end{lema}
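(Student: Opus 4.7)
The lemma has two parts: producing the radial majorant $U$, and the quantitative gap estimate. I would treat them in sequence.

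For the existence of $U$, I would first construct a radial solution $V\in C^2(B)$ of \eqref{problema} by the classical route, solving the ODE $V'' + \frac{N-1}{r}V' = f(V)$ with $V'(0) = 0$ and using \eqref{KO} (either by tuning $V(0)$, or by a monotone limit of solutions with finite Dirichlet data). To show $u\le V$ I would scale: for $\la > 1$ the function $V_\la(x) := V(\la x)$ is defined on $B_{1/\la}$, blows up on $\p B_{1/\la}$, and satisfies $\Delta V_\la = \la^2 f(V_\la) \ge f(V_\la)$ wherever $f\ge 0$, hence is a super-solution. A standard comparison principle---valid after absorbing the non-monotone part of $f$ via \eqref{hipo-f} (replacing $f$ by the monotone function $f + Kt^p$)---yields $V_\la \ge u$ on $B_{1/\la}$. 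Letting $\la \to 1^+$ and using continuity gives $u \le V$ on $B$; set $U := V$.

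For the estimate, let $w := U - u \ge 0$, so $\Delta w = f(U) - f(u)$. By \eqref{hipo-f} and the mean value theorem,
$$
\Delta w + Kp\, U^{p-1}\, w \ge 0 \quad \hbox{in } B.
$$
The plan is to compare $w$ with the barrier $\eta(x) := C\phi(U(x))$ on an annular neighborhood of $\p B$. Since $U$ is radial, a direct computation using $\phi'(t) = -1/F(t)$, $\phi''(t) = f(t)/F(t)^2$, and $|\nabla U|^2 \sim 4F(U)$ near $\p B$ (from \eqref{comp-derivada}) gives $\Delta \eta \sim 3C f(U)/F(U)$ near $\p B$. Choosing the annulus $\{U > N\}$ with $N$ large so that the annulus is narrow, a narrow-domain maximum principle yields $w \le \eta$ there for $C$ sufficiently large. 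On the complement, compactly contained in $B$, $w$ is bounded and $\phi(U)$ is bounded below by a positive constant, so the global estimate follows upon enlarging $C$.

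The main obstacle is matching at the outer boundary of the annulus: since neither $w$ nor $\eta$ converges pointwise on $\p B$, one needs $\limsup_{d(x)\to 0}(w - \eta) \le 0$ to apply the comparison. I would address this via Lemma \ref{lema-comp-asint} combined with the mean value theorem for $\psi$, writing
$$
|U - u| = \sqrt{2F(\xi)}\,|\psi(U) - \psi(u)|
$$
for an intermediate $\xi \in [u, U]$, and then refining the leading-order relation $\psi(U) \sim d \sim \psi(u)$ into a quantitative estimate $|\psi(U) - \psi(u)| = O(\phi(U)/\sqrt{F(U)})$. This refinement uses the PDE rather than just the asymptotic profile, and is the delicate technical step.
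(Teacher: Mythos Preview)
The paper does not prove this lemma; it is quoted from \cite{CDG} (Lemma~2.4 and Theorem~1.1 there), so there is no in-paper argument to compare with. Evaluated on its own, your proposal has a concrete error in the first part and leaves the second part essentially open.

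\textbf{Part 1.} The scaling step is wrong. For $\la>1$ you correctly compute $\Delta V_\la=\la^2 f(V_\la)\ge f(V_\la)$ where $f\ge 0$, but this makes $V_\la$ a \emph{sub}solution of $\Delta u=f(u)$, not a supersolution. The desired comparison $V_\la\ge u$ on $B_{1/\la}$ does not follow: at an interior negative minimum of $V_\la-u$ one only gets $\la^2 f(V_\la)\ge f(u)$, which is perfectly compatible with $V_\la<u$ even for monotone $f$, since $\la^2>1$. The standard route is not to scale a fixed solution but to take, for each $r<1$, the radial blow-up solution $V^{(r)}$ of $\Delta v=f(v)$ in $B_r$; this solves the \emph{same} equation, is infinite on $\partial B_r$ while $u$ is finite there, so comparison gives $u\le V^{(r)}$ in $B_r$. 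The family $V^{(r)}$ is monotone in $r$ and its limit as $r\to 1^-$ furnishes the radial majorant.

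\textbf{Part 2.} Your barrier computation shows $\Delta\eta\sim 3C f(U)/F(U)>0$ near $\p B$, so $\eta=C\phi(U)$ is itself a subsolution of $\Delta+KpU^{p-1}$; a narrow-domain maximum principle applied to $w-\eta$ has no obvious sign to work with. More importantly, the step you flag as ``delicate''---upgrading $\psi(u)\sim\psi(U)\sim d$ to $|\psi(U)-\psi(u)|=O(\phi(U)/\sqrt{F(U)})$---is circular: by the mean value theorem this bound is equivalent to $|U-u|=O(\phi(U))$, which is exactly the conclusion. The proof in \cite{CDG} does not go through a barrier of this form; it combines an ODE analysis of the radial profile with a PDE comparison that produces the $\phi(U)$ rate directly, and that argument is the genuine content of the lemma rather than a detail to be filled in.
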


\bigskip

\section{A maximum principle in narrow domains}
\setcounter{section}{3}
\setcounter{equation}{0}

This section is devoted to consider the fundamental tool in the proof of our symmetry results, 
a maximum principle for singular problems in suitable narrow domains. Before we can 
state it, we need to introduce some notation related to the method of moving planes, which 
is rather standard by now (cf. \cite{GNN}, \cite{BN}). For $\la \in (0,1)$, set 
$$
\begin{array}{l}
\ds \Sigma_\la:=\{x\in B: \ x_1 > \la\}\\[0.25pc]
\ds T_\la :=\{x\in B: \ x_1 = \la\}\\[0.25pc]
x_\la=(2\la-x_1,x'), \quad \hbox{where } x=(x_1,x')\in B.
\end{array}
$$
Moreover, if $u$ is any function defined on $B$, we also set
$$
u_\la(x)=u(x_\la), \quad x\in \Sigma_\la.
$$
Next let $p>1$, $C_0>0$ and take a radially symmetric solution $U$ of \eqref{problema}. 
If $D\subset \Sigma_\la$ is an arbitrary domain, we are interested in 
determining whether the maximum principle holds for the problem
\begin{equation}\label{eq-pm}
\left\{
\begin{array}{ll}
\De u + C_0 U_\la(x)^\frac{p-1}{2} u \le 0 & \hbox{in } D\\[0.25pc]
\ \ \ds \liminf_{x\to \p D} u \ge 0.
\end{array}
\right.
\end{equation}
Notice that $U_\la(x)^\frac{p-1}{2}$ can be very singular when $x_\la$ approaches 
$\p B$, hence the maximum principle is not expected to hold in general for 
problem \eqref{eq-pm}, even 
if $\la$ is close enough to 1 (see Remark \ref{rem-maxp} below). However, it can be 
recovered when the domain $D$ is suitably narrow near $\p B$. To make this more 
precise, take $x_0\in \p B$ and let $\mathcal{U}$ be a neighborhood of $x_0$. 
We will assume that $D$ is contained in a sort of `lentil' of the form 
$D\subset \{x\in B: \ \la<x_1<\la +H(x)\}$, where 
$$
H(x)= \frac{\phi(U_\la(x))}{\sqrt{F(U_\la(x))}}.
$$
Observe that $H(x)\to 0$ as $x \to T_\la \cap \p B$, therefore $D$ is really narrow when 
approaching the boundary. This implies that, if $x_0\not \in T_\la$ and $x_0\ne e_1$ 
then $D\cap \mathcal{U}$ would be empty for a small enough neighborhood. Thus 
only the cases $x_0\in T_\la \cap \p B$ and $x_0 =e_1$ are meaningful.

The validity of condition \eqref{hipo-segunda} is 
essential in our next result.

\begin{lema}\label{lema-MP}
Assume $f$ is positive for large values and verifies the Keller-Osserman condition \eqref{KO} and 
hypothesis \eqref{hipo-segunda}. Then for $x_0=e_1$ or $x_0\in T_\la \cap \p B$, there exists a neighborhood 
$\mathcal{U}$ of $x_0$ such that if $D\subset \mathcal{U}\cap B\cap \{x:\ \la<x_1<\la+H(x)\}$ for some 
$\la\in (0,1)$, and $u\in C^2(D)$ verifies \eqref{eq-pm}, we have $u\ge 0$ in $D$.
\end{lema}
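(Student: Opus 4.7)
The plan is to prove the lemma by contradiction via an energy argument based on a Hardy-type inequality adapted to the lentil-shaped region, rather than by constructing a pointwise barrier. Hypothesis \eqref{hipo-segunda} will enter precisely through the smallness of the product $c(x)H(x)=C_0U_\lambda^{(p-1)/2}\phi(U_\lambda)/\sqrt{F(U_\lambda)}$ near $x_0$, which is exactly what is needed to subordinate the singular weighted norm $\int c\,v^2$ to the Dirichlet energy $\int|\nabla v|^2$.

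Suppose, for contradiction, that $u\in C^2(D)$ satisfies \eqref{eq-pm} yet $u(x^\ast)<0$ at some $x^\ast\in D$, and set $v:=u^-$. The assumption $\liminf_{x\to\partial D}u\ge 0$ makes $v$ extend continuously by zero to $\partial D$, and the truncations $v_\varepsilon:=(v-\varepsilon)^+$ are compactly supported in $D$, so the limit $\varepsilon\to 0^+$ yields $v\in H^1_0(D)$. Multiplying $\Delta u+c(x)u\le 0$, with $c(x):=C_0 U_\lambda(x)^{(p-1)/2}$, by $v\ge 0$ and integrating by parts --- using $\nabla u\cdot\nabla v=-|\nabla v|^2$ and $uv=-v^2$ pointwise --- produces the fundamental energy estimate
\[
\int_D|\nabla v|^2\,dx\;\le\;\int_D c(x)\,v(x)^2\,dx.
\]

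The heart of the argument is to contract the right hand side. For each slice at fixed $x'$, $v(\cdot,x')$ vanishes on the boundary of that slice and, since $x_1-\lambda<H(x)$ on $D$, Cauchy-Schwarz gives
\[
v(x_1,x')^2\le(x_1-\lambda)\int_\lambda^{x_1}|\partial_s v|^2\,ds\le H(x)\int_\lambda^{x_1}|\partial_s v|^2\,ds.
\]
Multiplying by $c(x)$, integrating over $D$, and applying Fubini yields
\[
\int_D c\,v^2\,dx\;\le\;\Bigl(\sup_{D}cH\Bigr)\Bigl(\sup_{D}H\Bigr)\int_D|\partial_{x_1}v|^2\,dx.
\]
When $x_0\in T_\lambda\cap\partial B$, we have $U_\lambda(x)\to\infty$ as $x\to x_0$, so $\sup_{D}H\to 0$ and $\sup_{D}cH\to 0$ by hypothesis \eqref{hipo-segunda}; shrinking $\mathcal U$ around $x_0$ forces their product strictly below $1$. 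The residual case $x_0=e_1$ is easier: there $x_\lambda$ stays in the interior of $B$, so $c$ and $H$ are uniformly bounded on $\mathcal U\cap B$, and one replaces the Hardy step by the classical Poincar\'e inequality $\|v\|_{L^2}\le C(\mathrm{diam}\,\mathcal U)\|\nabla v\|_{L^2}$, which again gives a contraction once $\mathcal U$ is small enough.

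Either way, combining the two bounds forces $\int_D|\nabla v|^2\le\eta\int_D|\nabla v|^2$ with $\eta<1$, hence $v\equiv 0$ on $D$, contradicting $u(x^\ast)<0$. The main obstacle I anticipate is the rigorous verification that $v\in H^1_0(D)$: the set $\partial D$ is irregular and may contain pieces of $\partial B$ (where $u$ may blow up to $+\infty$ but $u^-=0$), of the curved surface $\{x_1=\lambda+H(x)\}$, and of $T_\lambda$, and the hypothesis gives only $\liminf u\ge 0$ there. This is handled by the $(v-\varepsilon)^+$ cutoff combined with monotone convergence as $\varepsilon\to 0^+$, but warrants some care. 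A secondary point is justifying the Fubini swap with the nonmonotone $H(x)$, which needs only the pointwise bound $cH\le \sup_DcH$ before integrating.
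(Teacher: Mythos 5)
Your route is genuinely different from the paper's (the paper builds an explicit positive strict supersolution $\omega=\cos\bigl(\mu\,U_\lambda^{(p-1)/2}(x_1-\lambda)\bigr)$, kept above $\cos(\pi/4)$ thanks to \eqref{hipo-segunda}, and then runs a pointwise minimum argument on $u/\omega$), and you have correctly identified the quantitative heart of the matter: \eqref{hipo-segunda} makes $U_\lambda^{(p-1)/2}H$ small near the blow-up part of the boundary, so the thin-direction Poincar\'e/Hardy constant beats the potential. However, there is a genuine gap precisely at the step you flag as delicate. The potential $c(x)=C_0U_\lambda(x)^{(p-1)/2}$ is unbounded near $T_\lambda\cap\partial B$ (and nothing in the hypotheses makes it integrable over $D$), while the boundary information is only $\liminf_{x\to\partial D}u\ge 0$, with no rate. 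Testing with $v_\varepsilon=(v-\varepsilon)^+$ gives $\int_D|\nabla v_\varepsilon|^2\le\int_D c\,v\,v_\varepsilon=\int_D c\,v_\varepsilon^2+\varepsilon\int_{\{v>\varepsilon\}}c\,v_\varepsilon$, and after absorbing $\int c\,v_\varepsilon^2$ by your slice inequality you are left with the error $\varepsilon\int_{\{v>\varepsilon\}}c\,v_\varepsilon$, which vanishes as $\varepsilon\to0$ only if something like $c\in L^1(D)$ or $\int_D c\,v^2<\infty$ is known --- neither of which follows from the hypotheses or from ``$v$ tends to $0$ at $\partial D$'' without a rate. Monotone convergence alone only yields $\int_D|\nabla v|^2\le\int_D c\,v^2$ as an inequality in $[0,+\infty]$, which can degenerate to $\infty\le\infty$ and then neither $v\in H^1_0(D)$ nor the final contraction gives a contradiction. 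This is exactly the situation the paper's barrier argument is designed to avoid: a pointwise comparison needs no integrability of the singular coefficient, only the bound on the phase $\mu U_\lambda^{(p-1)/2}(x_1-\lambda)\le\pi/4$, which is where \eqref{hipo-segunda} enters. To rescue your proof you would need an additional quantitative ingredient (e.g.\ a rate of decay of $u^-$ near $\partial D$, or an argument that $c$ is integrable over the lentil, which amounts to a nontrivial comparability estimate for $U_\lambda$ along the slices).

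A secondary, fixable, inaccuracy concerns the case $x_0=e_1$: you assert that $c$ and $H$ are uniformly bounded on $\mathcal U\cap B$ because $x_\lambda$ stays interior, but the lemma must hold for all admissible $\lambda\in(0,1)$ with a neighborhood chosen independently of $\lambda$, and for $\lambda$ close to $1$ (the regime actually used in Step 1 of the symmetry proof) the reflected points of $\mathcal U\cap\{x_1>\lambda\}$ do approach $\partial B$, so $c$ is not bounded there. The cure is to split according to whether $\lambda$ is close to $1$ (then $U_\lambda$ is large on the relevant set and your main Hardy argument applies) or bounded away from $1$ (then your Poincar\'e argument applies with $\mathcal U$ small depending on that bound); as written, the case analysis does not cover the regime that matters in the application.
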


\begin{proof}
We begin by remarking that the choice of $x_0$ implies the following: for every neighborhood $\mathcal{V}$ 
of $x_0$, there exists another neighborhood $\mathcal{U}$ such that $x_\la \in \mathcal{V}$ 
whenever $x\in \mathcal{U}$. Thus we are only dealing in what follows with points $x$ such 
that $x_\la$ is near the boundary.

The first step is to look for a function $\omega$ verifying $\De \omega + C_0 U_\la(x)^{p-1} \omega < 0$ 
in $D$. We choose
$$
\omega(x)= \cos (\mu U_\la(x)^\frac{p-1}{2} (x_1-\la)),
$$
where $\mu>0$ will be selected later on. It clear that
\begin{equation}\label{eq-otra-estim}
\begin{array}{rl}
\De \omega \hspace{-2mm} & = \ds -\mu^2 \cos(\mu U_\la(x)^\frac{p-1}{2} (x_1-\la)) |\nabla U_\la(x)^\frac{p-1}{2} 
(x_1-\la)|^2\\[0.25pc]
& \quad  \ds -\mu \sin (\mu U_\la(x)^\frac{p-1}{2} (x_1-\la)) \De  (U_\la(x)^\frac{p-1}{2} (x_1-\la)).
\end{array}
\end{equation}
Let us estimate each of these two terms. We have
\begin{align}\label{eq-est-1}
|\nabla U_\la(x)^\frac{p-1}{2} (x_1-\la)| & =
\left|\frac{p-1}{2} U_\la(x)^\frac{p-3}{2} \nabla U_\la(x) (x_1-\la) + U_\la(x)^\frac{p-1}{2} e_1\right| \nonumber \\
& \ge U_\la(x)^\frac{p-1}{2} - \frac{p-1}{2} U_\la(x)^\frac{p-3}{2}|\nabla U_\la(x)| (x_1-\la)\\
& \ge U_\la(x)^\frac{p-1}{2} - \frac{p-1}{2} U_\la(x)^\frac{p-3}{2}|\nabla U(x_\la)| H(x). \nonumber
\end{align}
Using \eqref{comp-derivada} we see that $|\nabla U(x_\la)|\ge C\sqrt{F(U_\la(x))}$ 
when $x_\la$ is in a neighborhood of $\p B$. It then follows from \eqref{eq-est-1} and the definition of 
$H(x)$ that 
\begin{equation}\label{eq-est-final-1}
|\nabla U_\la(x)^\frac{p-1}{2} (x_1-\la)| \ge 
U_\la(x)^\frac{p-1}{2} \left(1-C\frac{\phi(U_\la(x))}{U_\la(x)}\right)
\ge \frac{1}{2} U_\la(x)^\frac{p-1}{2},
\end{equation}
when $x_\la$ is in a neighborhood of $\p B$. 
As for the last term in \eqref{eq-otra-estim}, we obtain 
\begin{align*}
\De  (U_\la(x)^\frac{p-1}{2} (x_1-\la)) & = \frac{p-1}{2} \frac{p-3}{2} U_\la(x)^\frac{p-5}{2} 
|\nabla U_\la(x)|^2 (x_1-\la)\\
& + \frac{p-1}{2} U_\la(x)^\frac{p-3}{2} f(U_\la(x)) (x_1-\la) \\
& +(p-1) U_\la(x)^\frac{p-3}{2} \p_1 U_\la(x).
\end{align*}
Proceeding as above and taking into account that $f$ is nonnegative for large values, 
we deduce
\begin{align}\label{eq-est-final-2}
\nonumber \De  (U_\la(x)^\frac{p-1}{2} (x_1-\la)) & \ge - C U_\la(x)^\frac{p-5}{2} F(U_\la(x)) H(x)\\ 
& \quad -C U_\la(x)^\frac{p-3}{2} \sqrt{F(U_\la(x))}\nonumber \\ 
& = \ds  -C U_\la(x)^\frac{p-3}{2} \sqrt{F(U_\la(x))}\left(1+C \frac{\phi(U_\la(x))}{U_\la(x)}\right)\\
& \ge -C U_\la(x)^\frac{p-3}{2} \sqrt{F(U_\la(x))}. \nonumber
\end{align}
Assume for the moment that 
\begin{equation}\label{eq-requisito}
\mu U_\la(x)^\frac{p-1}{2} (x_1-\la)\le \frac{\pi}{4}.
\end{equation} 
Thus, we obtain from \eqref{eq-otra-estim}, \eqref{eq-est-final-1} and \eqref{eq-est-final-2} that
\begin{align*}
\De \omega + C_0 U_\la(x)^{p-1} \omega & \le (C_0 -C\mu^2) U_\la(x)^{p-1}\cos(\mu U_\la(x)^\frac{p-1}{2} (x_1-\la))\\
& \quad +C \mu U_\la(x)^\frac{p-3}{2} \sqrt{F(U_\la(x))} \sin (\mu U_\la(x)^\frac{p-1}{2} (x_1-\la)).
\end{align*}
Now we choose $\mu$ large enough so that $C_0-C\mu^2 \le -1$, say. Using that 
$\sin z\le z$ for $z>0$, and that the cosine is bounded away from zero, we obtain
\begin{align*}
\De \omega + C_0 U_\la(x)^{p-1} \omega & \le -C U_\la(x)^{p-1}
+C  U_\la(x)^{p-2} \sqrt{F(U_\la(x))}(x_1-\la)\\
& \le -C U_\la(x)^{p-1} +C  U_\la(x)^{p-2} \phi(U_\la(x))\\
& = -C U_\la(x)^{p-1} \left(1- C \frac{\phi(U_\la(x))}{U_\la(x)}\right) < 0,
\end{align*}
if $x_\la$ is close enough to $\p B$. This gives the desired function $\omega$, provided that 
\eqref{eq-requisito} holds. To check that it actually holds, observe that 
$$
\mu U_\la(x)^\frac{p-1}{2} (x_1-\la) \le \mu U_\la(x)^{p-1} \frac{\phi(U_\la(x))}{\sqrt{F(U_\la(x))}} 
\to 0
$$
as $x_\la\to \p B$, because of condition \eqref{hipo-segunda}. Notice also that \eqref{eq-requisito} 
also shows that $\omega$ is bounded away from zero in $\overline{D}$. 

To conclude the proof we argue in a more or less standard way. Assume $u$ is as in the statement of 
the lemma and let $z=u/\omega$. Then 
$$
\omega \De z  + 2 \nabla z\nabla \omega + z(\Delta \omega+CU_\la(x)^{p-1} \omega)\le 0 
\quad \hbox{in } D,
$$
while $\liminf_{x\to \p D} z(x)\ge 0$. If $z$ is negative somewhere then it has to achieve a negative 
minimum at some point $x_0\in D$. It follows that 
$$
z(x_0) (\De \omega(x_0) + C U_\la(x_0)^{p-1} \omega(x_0)) \le 0,
$$
a contradiction. Thus $z\ge 0$ in $D$, therefore $u\ge 0$ in $D$, as we wanted to show.
\end{proof}

\medskip

\begin{obs}\label{rem-maxp}{\rm 
With the same kind of ideas, the ``critical" problem
\begin{equation}\label{eq-remark-pm}
\left\{
\begin{array}{ll}
\De u + C_0 d(x)^{-2} u \le 0 & \hbox{in } D\\[0.25pc]
\ds \liminf_{x\to \p D} u \ge 0,
\end{array}
\right.
\end{equation}
where $C_0>0$, can be analyzed. We assume that $D$ is a subdomain of a smooth bounded domain $\Om$, and 
moreover $D$ is contained for instance in a slab of the form $0<x_1<H(x)$, where the function $H$ 
verifies 
$$
\lim_{x\to \p \Om} \frac{H(x)}{d(x)}=0
$$
with $d(x)=\text{dist}(x,\p\Om)$. Then a maximum principle in the spirit of Lemma \ref{lema-MP} 
for problem \eqref{eq-remark-pm} holds.

\medskip

Problem \eqref{eq-remark-pm} is critical in the sense that the maximum principle 
in narrow domains does not hold in general, without further requiring a smallness assumption 
on the set $D$. To see this, it is enough to consider the one-dimensional version 
\begin{equation}\label{eq-remark-unidim}
u'' + C_0 x^{-2} u \le 0 \quad \hbox{in } (0,\delta).
\end{equation}
When $C_0>\frac{1}{4}$, the function 
$$
u(x) = x^\frac{1}{2} \cos\left( \frac{\sqrt{4C_0-1}}{2} \log x\right)
$$
verifies inequality \eqref{eq-remark-unidim} and has infinitely many zeros accumulating at $x=0$, 
hence the maximum principle is not valid. With a standard change of variables, it can 
be seen that the same situation arises in the radially symmetric version of 
\eqref{eq-remark-unidim}, which coincides with \eqref{eq-remark-pm} when $\Om$ is a ball.
}\end{obs}

\bigskip

\section{Proof of the symmetry results}
\setcounter{section}{4}
\setcounter{equation}{0}

In this final section we will give our proofs of Theorem \ref{th-simetria} and 
Corollaries \ref{th-simetria-2} and \ref{coro-ejemplo}. The proof of Theorem \ref{th-simetria} 
follows with the use of moving planes, while that of Corollary \ref{coro-ejemplo} is a simple 
consequence. In spite of this, we are providing a second proof of the corollary 
which can also be generalized to deal with some more general nonlinearities. 

\medskip

\begin{proof}[Proof of Theorem \ref{th-simetria}]
In order to make the proof clearer, we will divide it in several steps.
We adopt the notation introduced in Section 3. 

\medskip

\noindent {\it Step 1.} $u\ge u_\la$ in $\Sigma_\la$ if $\la$ is close to 1. 

\smallskip

Assume this is not true, so that the set
$$
D_\la:=\{ x\in \Sigma_\la:\ u(x)<u_\la(x)\}
$$
is nonempty and open. By Lemma \ref{lema-exist-radial}, there exists a radially symmetric solution 
$U$ of \eqref{problema} such that $u\le U$ in $B$. Since $U$ is radially increasing, it follows 
that $U\ge U_\la$ for every $\la$. Thus if $x\in D_\la$ we have for some $C>0$
\begin{align*}
0 & <u_\la(x)-u(x) \le U_\la(x) - U(x) + C \phi(U(x))\\
& \le U_\la(x) - U(x) + C \phi(U_\la(x))\\
& =2\p_1 U(\xi) (\la-x_1) + C \phi(U_\la(x)),
\end{align*}
where $\xi=\xi(x)$ is a point in the segment $[x_\la,x]$ and we have used the mean value theorem. 
We will use throughout the notation $\p_1$ for the partial derivative with respect to the 
variable $x_1$.

Notice that $(r^{N-1}U'(r))'\ge 0$ for $r$ close to 1, while $U'(r)>0$ when $r>0$. 
Therefore, if $\la$ is close enough to 1, 
using that $|\xi| \ge |x_\la|$ and \eqref{comp-derivada} we obtain
\begin{align*}
\p_1 U(\xi) & = U' (\xi)\frac{\xi_1}{|\xi|} \ge C U'(\xi) \ge \left( \frac{|x_\la|}{|\xi|}\right)^{N-1} U'(x_\la)\\
& \ge C U'(x_\la) \ge C \sqrt{F(U_\la(x))}.
\end{align*}
Thus for every $x\in D_\la$ we have
$$
0< -C(x_1-\la)\sqrt{F(U_\la)} + C\phi(U_\la).
$$
This means that the set $D_\la$ is contained in the slab $\la < x_1<\la + H(x)$,
where 
$$
H(x):= C\frac{\phi(U_\la(x))}{\sqrt{F(U_\la(x))}}.
$$
On the other hand, if we denote $w_\la=u-u_\la$ and use hypothesis \eqref{hipo-f} on $f$ we 
see that 
$$
\Delta w_\la + K\left(\frac{u^p-u_\la^p}{u-u_\la} \right) w_\la \le 0 \qquad \hbox{in } D_\la.
$$
Again because of the mean value theorem, there exists $\eta=\eta(x)$ such that 
$u(x)<\eta<u_\la(x)$ and 
$$
\frac{u^p-u_\la^p}{u-u_\la}= p\eta^{p-1} \le p u_\la^{p-1} \le p U_\la^{p-1}.
$$
Thus
$$
\De w_\la + C U_\la^{p-1} w_\la \le 0 \quad \hbox{in } D_\la.
$$
Moreover, $\liminf_{x\to \p D_\la} w_\la \ge 0$. This is clear when $x$ approaches 
points on $\p D_\la\cap B$. When $x$ approaches a point $x_0 \in \p D_\la \cap \p B$, 
recalling that $w_\la(x) > U(x)- U_\la(x) - C \phi(U_\la(x))$, we have
$$
\liminf_{x\to x_0} w_\la(x) \ge \liminf_{x\to x_0}  -C \phi(U_\la(x)) = 0.
$$
Thus, we have shown that
$$
\left\{
\begin{array}{ll}
\De w_\la + C U_\la^{p-1} w_\la \le 0 & \hbox{in } D_\la\\[0.25pc]
\ds \liminf_{x\to \p D_\la} w_\la \ge 0.
\end{array}
\right.
$$
Observe that hypothesis \eqref{hipo-segunda} and Lemma \ref{lema-MP} give $w_\la\ge 0$ in $D_\la$ if 
$\la$ is close enough to 1, a contradiction. This contradiction shows that $D_\la$ is 
empty for $\la$ close enough to 1, therefore $u\ge u_\la$ in $\Sigma_\la$ if $\la$ is 
close enough to 1.

\medskip

\noindent {\it Step 2}. 
$$
\la^*=\inf\{ \la>0:\ u\ge u_\mu \hbox{ in } \Sigma_\mu, \hbox{ for every } \mu\in (\la,1)\}=0.
$$

\smallskip

Assume for a contradiction that $\la^*>0$. By continuity, we know that $u\ge u_{\la^*}$ in $\Sigma_{\la^*}$. 
Since we can write
$$
-\De (u-u_{\la^*})+a(x) (u-u_{\la^*})=0 \qquad 
\hbox{in }\Sigma_{\la^*}
$$
where $a(x)= \frac{f(u)-f(u_{\la^*})}{u-u_{\la^*}}$ is locally bounded in $\Sigma_{\la^*}$, 
we are in a position to apply the strong maximum principle to deduce that either $u\equiv u_{\la^*}$ 
or $u>u_{\la^*}$ in $\Sigma_{\la^*}$. Since we are assuming $\la^*>0$, the first option 
is not possible because $u=+\infty$ on the portion of $\p \Sigma_{\la^*}$ which touches the 
boundary of $B$, while $u_{\la^*}$ is finite there. Thus $u>u_{\la^*}$ in $\Sigma_{\la^*}$ 
and Hopf's principle gives $\p_1 u >\p_1 u_{\la_*}$ on $T_{\la^*}$, 
which readily implies $\p_1 u >0$ on $T_{\la^*}$. Therefore
\begin{equation}\label{eq-contra}
u>u_{\la^*} \quad \hbox{in }\Sigma_{\la^*} \qquad \hbox{and} \qquad 
\p_1 u >0 \quad \hbox{on } T_{\la^*}.
\end{equation}
On the other hand, by the very definition of $\la^*$ there exist sequences of positive 
numbers $\la_n \uparrow \la^*$ and points $x_n\in \Sigma_{\la_n}$ such that $u(x_n)<u_{\la_n}(x_n)$. We claim that 
the points $x_n$ can be taken outside a neighborhood of $T_{\la^*} \cap \p B$. 

Otherwise we would have that the set $D_{\la_n}:=\{x\in B:\ u(x)<u_{\la_n}(x)\}$ is 
contained in a neighborhood of $T_{\la^*}\cap \p B$ as small as we desire. We can then argue as 
in Step 1 to show that $D_{\la_n}$ is empty, a contradiction. 

Thus we may assume that $x_n\to x_0 \in \overline{\Sigma}_{\la^*} \setminus (T_{\la^*} 
\cap \p B)$. There are three cases to consider:

\begin{itemize}

\item[(a)] $x_0\in \Sigma_{\la^*}$; this would imply by continuity of $u$ that $u(x_0)\le u_{\la^*}(x_0)$, 
which is impossible by \eqref{eq-contra}.

\item[(b)] $x_0\in \p B$; this is also impossible, since then $u-u_{\la_n}$ would be 
positive in a neighborhood of $x_0$ in contradiction with the choice of $x_n$. 

\item[(c)] $x_0\in T_{\la^*}\cap B$; let $\eta_n$ be the projection of $x_n$ on $T_{\la_n}$. 
Since $u(x_n)-u_{\la_n}(x_n)<0$ and $u(\eta_n)-u_{\la_n}(\eta_n)=0$, there 
exists a point $\xi_n$ in the segment $[x_n,\eta_n]$ such that $\p_1(u(\xi_n)-u_{\la_n}(\xi_n))< 0$. 
It is clear that $\xi_n\to x_0$, hence we deduce $\p_1(u(x_0)-u_{\la^*}(x_0))\le 0$, 
contradicting \eqref{eq-contra}.

\end{itemize}

The contradiction reached in all cases shows that $\la^*>0$ is impossible, hence $\la^*=0$, 
as claimed.

\medskip

\noindent {\it Step 3}. Completion of the proof. 

\smallskip

By Step 2 we deduce that 
$$
u(x_1,x')\ge u(-x_1,x') \quad \hbox{when } x_1>0
$$
and
$$
\p_1 u >0 \quad \hbox{if } x_1>0.
$$
Since we can replace $x_1$ by $-x_1$, it follows that $u$ is symmetric with respect to the 
direction $x_1$. Finally, this direction is arbitrary so it follows that $u$ is radially symmetric and 
$\p_r u >0$ in $B\setminus \{0\}$. This concludes the proof.
\end{proof}

\bigskip

\begin{proof}[Proof of Corollary \ref{th-simetria-2}]
We only need to show that condition \eqref{hipo-segunda} is verified when $p=5$. 
Observe that $F$ is nondecreasing for large values, so that it is immediate that 
$$
\phi(t) \le \sqrt{\frac{2}{F(t)}} \psi(t).
$$
Hence since $\lim_{t\to +\infty} \psi(t)=0$, there exists a positive constant 
$C$ such that 
$$
\frac{t^2 \phi(t)}{\sqrt{F(t)}} \le C\frac{t^2}{F(t)}
$$
for large $t$. Then \eqref{hipo-segunda} follows because the Keller-Osserman 
condition \eqref{KO} implies $F(t)/t^2\to +\infty$. The proof is concluded.
\end{proof}

\bigskip

\begin{proof}[First proof of Corollary \ref{coro-ejemplo}]
The proof is immediate from Theorem \ref{th-simetria}. First of all, observe that 
$f'(t) \ge - t^q$ for positive $t$, therefore $f(t)+\frac{1}{q+1} t^{q+1}$ is 
nondecreasing for positive $t$. It remains to show that \eqref{hipo-segunda} holds 
with $p=q+1$. Observe that $F(t)\sim \frac{1}{q+1} t^{q+1}$ as $t\to +\infty$, 
which implies $\phi(t)\sim \frac{q+1}{q}t^{-q}$ as $t\to +\infty$. Hence
$$
\frac{t^\frac{q}{2} \phi(t)}{\sqrt{F(t)}} \sim 
C t^{-\frac{2q+1}{2}} \to 0,
$$
and we conclude using Theorem \ref{th-simetria}.
\end{proof}

\bigskip

\begin{proof}[Second proof of Corollary \ref{coro-ejemplo}]
Let $u$ be a solution of \eqref{problema-ejemplo}, and $U$ be the 
radially symmetric solution given in Lemma \ref{lema-exist-radial}. 
By Lemma \ref{lema-comp-asint} we have 
$$
U(x) \sim C d(x)^{-\frac{2}{q-1}} \quad \hbox{as } d(x)\to 0.
$$
Since $\phi(t)\sim C t^{-q}$ as $t\to +\infty$, we deduce from Lemma 
\ref{lema-exist-radial} that 
$$
|u(x)-U(x)| \le C d(x)^\frac{2q}{q-1}, \quad x\in B.
$$
Moreover, using the mean value theorem we also see that
\begin{align*}
|\Delta (u-U)(x)| & =|f(u(x))-f(U(x))| \\
& \le C U(x)^q |u(x)-U(x)| \le C, \quad x\in B.
\end{align*}
We can use standard regularity (see for example Chapter 4 in \cite{GT} or an 
explicit statement in Lemma 12 of \cite{GM}) to get
$$
\| \nabla (u-U)\|_{L^\infty(B)} \le C \left( \left \| \frac{u-U}{d} \right\|_{L^\infty(B)}  
+ \| d\Delta (u-U)\|_{L^\infty(B)} \right) \le C.
$$
Denoting by $\nabla_T$ the tangential gradient and by $\p_r$ the radial derivative, 
we have $\nabla_T U=0$ and $\p_r U \to +\infty$ as $x\to \p B$ (cf. \eqref{comp-derivada}). 
In particular, we have for the tangential and radial derivatives of the solution $u$:
$$
\begin{array}{l}
|\nabla_T u|\le C\\[0.5pc]
\p_r u \to +\infty \quad \hbox{as } x\to \p B.
\end{array}
$$
We can then use Theorem 2.1 in \cite{PV} to deduce that $u$ is radially symmetric and radially 
increasing.
\end{proof}

\bigskip

\begin{obs}\label{rem-general} {\rm The approach followed in the second proof of 
Corollary \ref{coro-ejemplo} can be used to obtain a slightly more general result. 
Indeed, assume $f$ is positive for large values, verifies the Keller-Osserman condition 
\eqref{KO} and $L(t)$ stands for the Lipschitz constant of $f$ in the interval $[t_0,t]$ 
for some fixed $t_0$. If there exists $\g\in \R$ such that 
\begin{align}\label{cond-final}
\nonumber \limsup_{t\to +\infty} \psi(t)^{-\g} \phi(t)<+\infty\\
\limsup_{t\to +\infty} \phi(t) \psi(t)^{2-\g}L(t)<+\infty\\
\nonumber \lim_{t\to +\infty} \psi(t)^{2(1-\g)}F(t) = +\infty,
\end{align}
then every solution $u\in C^2(B)$ of \eqref{problema} is radially symmetric and 
radially increasing.

To see this, observe that $\psi(U)\sim d$ as $d\to 0$ (Lemma \ref{lema-comp-asint}). Therefore 
the first condition in \eqref{cond-final} implies $|u-U|\le C \phi(U) \le d^\g$ in $B$, while 
using the second condition
$$
|d^2 \De (u-U)| \le C d^2 L(U) |u-U| \le C d^2 L(U) \phi(U)\le C d^\g \quad \hbox{in } B.
$$
Thus $|d\nabla(u-U)|\le Cd^\g$ in $B$ by classical regularity. Notice also that 
the third condition in \eqref{cond-final} implies $d^{\g-1} =o(\sqrt{F(U)})=o(U')$ as 
$d\to 0$. Hence
$$
\frac{|\nabla_T u|}{\p_r u}\le C \frac{d^{\g-1}}{U'-Cd^{\g-1}}\to 0
$$
as $d\to 0$, while $\p_r u\to +\infty$ as $d\to 0$. We conclude using Theorem 2.1 in \cite{PV}. 
The nonlinearity in \eqref{problema-ejemplo} verifies condition \eqref{cond-final} with $\gamma=2$.
}\end{obs}

\bigskip

\noindent {\bf Acknowledgements.} Supported by FONDECYT 1150028 (Chile).
J. G-M. is also supported by Ministerio de Econom\'ia y Competitividad under 
grant MTM2014-52822-P (Spain).


\end{document}